\documentclass[letterpaper, 10 pt, conference]{ieeeconf}  

\IEEEoverridecommandlockouts                              

\usepackage{amsmath}
\usepackage{comment}
\usepackage{graphicx}
\usepackage{amsfonts}
\usepackage{booktabs}
\usepackage{multirow}
\usepackage{tabularx}
\usepackage{array}
\usepackage{makecell}
\usepackage{xcolor}  
\usepackage{url}
\usepackage{hyperref}
\newtheorem{theorem}{Theorem}
\newtheorem{lemma}{Lemma}

\newtheorem{assumption}{Assumption}
\newtheorem{remark}{Remark}
\newtheorem{definition}{Definition}
\newtheorem{corollary}{Corollary}

\title{Provably Safe Decentralized Contingency MPC under State-Only Information and Limited Sensing for Nonlinear Multi-agent Systems}

\author{Max Studt$^1$, Georg Schildbach$^1$}

\begin{document}
\maketitle

{
  \renewcommand{\thefootnote}{}%
  \footnotetext[1]{$^1$Institute for Electrical Engineering in Medicine of the University of Luebeck, Germany, (\emph{\{m.studt, georg.schildbach\}@uni-luebeck.de})}
}

\begin{abstract}
This paper considers decentralized contingency MPC for multi-agent control under a state-only information pattern, with particular focus on limited sensing and plug-and-play operation. The objective is to retain recursive feasibility, safety, and Lyapunov-type convergence while reducing conservatism in local interaction handling. The framework relies on agent-wise fallback regions (safe sets) in which a feasible contingency maneuver to a safe equilibrium is always available. A novel safe-set update mechanism is introduced that supports less conservative decentralized interaction while preserving the underlying guarantees. This, in turn, enables memory-free local interaction and finite sensing ranges without requiring agents to reconstruct the exact neighbor geometry. The resulting scheme remains fully decentralized and preserves the shared-first-input contingency MPC structure. Theoretical guarantees and simulation results illustrate the effectiveness of the approach in dense multi-agent scenarios.
\end{abstract}
 
\section{Introduction}

Decentralized collision avoidance without inter-agent communication is a challenging problem in multi-agent control. In this setting, each agent has to plan its own motion based only on local state measurements of nearby agents, while future trajectories, inputs, and intentions of other agents are unavailable. Such a state-only information pattern is particularly relevant when communication is unreliable, unavailable, or undesirable, and when agents can only rely on finite-range sensing.

Reactive geometric methods, such as velocity obstacles and reciprocal collision avoidance, address collision avoidance under limited information and without trajectory exchange \cite{FioriniShiller1998,vanDenBergORCA2011}. These approaches are attractive due to their decentralized nature and low computational complexity. However, they are typically not formulated as predictive controllers with explicit multi-step state and input constraints, recursive feasibility, and closed-loop convergence guarantees. Predictive approaches can handle such constraints and optimize performance over a horizon, but decentralized MPC schemes often require some form of communication, trajectory exchange, or coordination \cite{DUNBAR2006549,Saccani2023,CHRISTOFIDES201321}. Communication-free predictive control schemes based on local sensing, neighbor-trajectory prediction, or iterative learning have also been proposed \cite{NeigborPrediction,Borelli}. However, these approaches rely on predicted neighbor behavior or data from previous task executions, or do not jointly provide MPC-style closed-loop guarantees under state-only information. This leaves a gap between communicationless local interaction
and predictive control with rigorous closed-loop guarantees.

Contingency MPC is particularly useful in this setting because it separates performance optimization from safety certification \cite{Alsterda2019}. Each agent computes a nominal trajectory for its control objective and, in parallel, a contingency trajectory that certifies the existence of a feasible fallback maneuver. In the context of communicationless decentralized collision avoidance, contingency trajectories together with separating bisecting planes have been used to establish recursive feasibility and closed-loop collision avoidance \cite{Georg}. However, this approach is formulated only for a linear double-integrator robot-swarm model and does not provide a general framework for nonlinear agent dynamics or a Lyapunov-type convergence result.

A contingency MPC approach based on local safe sets provides a complementary framework \cite{StudtSchildbach2026FoS}. However, in this framework, agents require information about other agents from previous time steps, introducing a history dependence that complicates plug-and-play operation and limited sensing.

This paper builds on \cite{StudtSchildbach2026FoS} and removes the need for history-dependent neighbor reconstruction by introducing a memory-free, state-dependent interaction mechanism. The active safe set is constructed such that it always remains contained in a region computable from the agent's current state. The resulting scheme preserves the contingency-MPC guarantees of recursive feasibility, collision avoidance, and Lyapunov-type convergence, while replacing history-dependent neighbor reconstruction by a memory-free, state-dependent interaction mechanism. This makes the approach naturally compatible with finite sensing and continuous plug-and-play operations.

The main contributions of this paper are as follows:
\begin{itemize}
    \item[C1] A memory-free, state-dependent interaction mechanism for decentralized contingency MPC.

    \item[C2] A safe-set construction with a memory-free outer representation recoverable from current state information.

    \item[C3] A finite-sensing and plug-and-play formulation with reduced conservatism.

    \item[C4] Recursive-feasibility, collision-avoidance, and Lyapunov-type convergence guarantees.

    \item[C5] A four-way intersection study with nonlinear vehicle dynamics and continuous plug-and-play operations.
\end{itemize}

\section{Preliminaries and Problem Statement}

\subsection{Notation and Agent Model}
\label{subsec:model}

Let \(\mathcal I:=\{1,\ldots,M\}\) denote the set of agents. Discrete time is indexed by \(t\in\mathbb Z_+\), with \(t^+:=t+1\). For \(a,b\in\mathbb Z\), \(a\leq b\), define \(\mathbb Z_a^b:=\{k\in\mathbb Z\mid a\leq k\leq b\}\).
The Euclidean norm is denoted by \(\|\cdot\|\). For \(c\in\mathbb R^{n_p}\)
and \(R\geq 0\), let
\(
    \mathbb B(c,R):=\{p\in\mathbb R^{n_p}\mid \|p-c\|\leq R\},
\) define the Euclidean Ball. Predicted quantities use the MPC index \((k|t)\).

Each agent \(i\in\mathcal I\) is modeled by the discrete-time nonlinear
system
\begin{equation}
    x_i(t^+) = f_i(x_i(t),u_i(t)),
    \label{eq:agent_dynamics}
\end{equation}
with state \(x_i(t)\in\mathbb R^{n_i}\), input
\(u_i(t)\in\mathbb R^{m_i}\), and position
\(p_i(t)=C_i x_i(t)\in\mathbb R^{n_p}\). The state and input constraints are
given by \(\mathcal X_i\subset\mathbb R^{n_i}\) and
\(\mathcal U_i\subset\mathbb R^{m_i}\). The body of agent \(i\) is modeled
as the ball \(\mathbb B(p_i(t),r_i)\), with radius \(r_i>0\).

Each agent is assigned a reference state \(x_i^{\mathrm{ref}}\), with
reference position \(p_i^{\mathrm{ref}}=C_i x_i^{\mathrm{ref}}\). The
reference is assumed to be an admissible equilibrium, i.e., there exists
\(u_i^{\mathrm{ref}}\in\mathcal U_i\) such that
\(
    x_i^{\mathrm{ref}}=f_i(x_i^{\mathrm{ref}},u_i^{\mathrm{ref}}),
    \ x_i^{\mathrm{ref}}\in\mathcal X_i .
\)
An equilibrium of agent \(i\) is denoted by
\((\bar x_i,\bar u_i)\), and its position by
\(\bar p_i=C_i\bar x_i\). Terminal contingency equilibria are denoted by
\((\bar x_i^{\mathrm c},\bar u_i^{\mathrm c})\).

\subsection{Standing Assumptions}
\label{subsec:standing_assumptions}
\begin{assumption}[Exact dynamics]
\label{ass:model_constraints}
The closed-loop evolution of each agent is exactly described by
\eqref{eq:agent_dynamics}. No model mismatch or external disturbance is
considered.
\end{assumption}

\begin{assumption}[Position invariance]
\label{ass:position_invariance}
The dynamics are translation invariant. For each
agent \(i\), there exists an embedding
\(\phi_i:\mathbb R^{n_p}\to\mathbb R^{n_i}\) such that
\(C_i\phi_i(\delta)=\delta\) and
\[
    f_i(x_i+\phi_i(\delta),u_i)
    =
    f_i(x_i,u_i)+\phi_i(\delta)
\]
for all admissible \((x_i,u_i)\) and all \(\delta\in\mathbb R^{n_p}\).
\end{assumption}

\begin{assumption}[Finite-sensing information pattern]
\label{ass:information_pattern}
At each time \(t\in\mathbb Z_+\), agent \(i\) can measure its own state
\(x_i(t)\) and the current states \(x_j(t)\) of agents
\(j\in\mathcal N_i(t)\), where \(\mathcal N_i(t)\subseteq
\mathcal I\setminus\{i\}\) denotes its sensing neighborhood. The reference
positions, future inputs, and predicted trajectories of other agents are
not available.
\end{assumption}

\subsection{State-dependent Safe Sets}
\label{subsec:state_dependent_safe_sets}

The safety mechanism is based on local safe sets in the position space.
A safe set represents a region in which an agent admits a feasible
contingency maneuver to an admissible safe equilibrium while remaining
inside the region.

For each agent \(i\), let \(\Gamma_i\) denote a deterministic
state-dependent safe-set generator. For any \(x_i\in\mathcal X_i\),
\(\Gamma_i(x_i)\) is the safe set generated from \(x_i\). At time \(t\), we write
\begin{equation}
    \mathcal G_i(t) := \Gamma_i(x_i(t)) \subset \mathbb R^{n_p}.
    \label{eq:generated_safe_set}
\end{equation}
The set \(\mathcal G_i(t)\) is induced by the current state of agent \(i\)
and can be computed from state information alone. Its specific construction
is arbitrary as long as the requirements stated in this paper are satisfied,
in particular the existence of a feasible contingency maneuver to an
admissible safe equilibrium within \(\mathcal G_i(t)\).

The set actually used to constrain the optimal control problem (OCP) is called the active safe set and
is denoted by \(\mathcal S_i(t)\). In general, the active safe set need not
coincide with $\mathcal G_i(t)$. It may be restricted further in
order to maintain safe interaction with neighboring agents. The key
structural property imposed in this paper is
\[
    \mathcal S_i(t) \subseteq \mathcal G_i(t),
    \qquad \forall t\in\mathbb Z_+ .
\]
Thus, the active safe set is always contained in a set that can be
computed directly from the current state. This property will later enable
neighboring agents to use \(\mathcal G_i(t)\) as a conservative outer
representation of \(\mathcal S_i(t)\), without knowing the exact active
safe set or its update history.
\begin{assumption}[Local reconstructability] The safe-set generators \(\Gamma_j\), of all agents are commonly known. Hence, whenever agent
\(i\) observes the state \(x_j(t)\) of an agent \(j\in\mathcal N_i(t)\), it
can reconstruct the generated safe set
\(
    \mathcal G_j(t)=\Gamma_j(x_j(t)).
\)  
\end{assumption}

The contingency plan of agent \(i\) at time \(t\) consists of predicted
states and inputs
\begin{equation}
    X_i^{\mathrm c}(t)
    :=
    \{x^{\mathrm c}_{i,(k|t)}\}_{k=0}^{N_c},
    \quad
    U_i^{\mathrm c}(t)
    :=
    \{u^{\mathrm c}_{i,(k|t)}\}_{k=0}^{N_c-1},
    \label{ContingencyTrajectory}
\end{equation}
with predicted positions
\(
    p^{\mathrm c}_{i,(k|t)}
    =
    C_i x^{\mathrm c}_{i,(k|t)} .
\)\\
Accounting for the physical extent of agent \(i\), the contingency
trajectory is constrained by
\begin{equation}
    \mathbb B(p^{\mathrm c}_{i,(k|t)},r_i)
    \subseteq
    \mathcal S_i(t),
    \qquad
    k\in\mathbb Z_0^{N_c}.
    \label{BodyContainmentConstraint}
\end{equation}

\begin{definition}[Admissible safe equilibrium]
\label{def:admissible_safe_equilibrium}
Let \(\mathcal A\subseteq\mathbb R^{n_p}\) be a safe set. A pair
\((\bar x_i,\bar u_i)\in\mathcal X_i\times\mathcal U_i\) is called an
admissible safe equilibrium for agent \(i\) with respect to \(\mathcal A\) if
\(
    \bar x_i = f_i(\bar x_i,\bar u_i)
\)
and, with \(\bar p_i := C_i\bar x_i\),
\(
    \mathbb B(\bar p_i,r_i)\subseteq \mathcal A .
\)
\end{definition}

The contingency trajectory is required to terminate in an admissible safe
equilibrium with respect to the active safe set \(\mathcal S_i(t)\). Thus,
the terminal contingency state satisfies
\begin{equation}
    x^{\mathrm c}_{i,(N_c|t)}=\bar x_i^{\mathrm c}(t),
    \label{TerminalConstrText}
\end{equation}
where \((\bar x_i^{\mathrm c},\bar u_i^{\mathrm c})\) is an admissible safe
equilibrium with respect to \(\mathcal S_i(t)\) in the sense of
Definition~\ref{def:admissible_safe_equilibrium}. Once this equilibrium
is reached, the agent can remain inside the active safe set by applying
the constant input \(\bar u_i^{\mathrm c}\).

The role of the safe sets in multi-agent collision avoidance is
geometric. If the active safe sets of two agents are disjoint,
\[
    \mathcal S_i(t)\cap\mathcal S_j(t)=\emptyset,
    \qquad i\neq j,
\]
and both agents keep their bodies inside their respective active safe
sets, then their bodies are disjoint as well. Therefore, maintaining
disjoint active safe sets is sufficient to certify collision avoidance. How the active safe sets are updated locally while preserving pairwise disjointness is detailed in Section~\ref{sec:memory_free_safe_set_construction}.

\section{Decentralized Contingency MPC Formulation}
\label{sec:contingency_mpc_formulation}

\subsection{Nominal and Contingency Plans}
\label{subsec:nominal_and_contingency_plans}

Let \(N_n\in\mathbb Z_{>0}\) and \(N_c\in\mathbb Z_{>0}\) denote the
nominal and contingency prediction horizons, respectively. The nominal
horizon \(N_n\) is chosen according to the desired performance behavior,
while the contingency horizon \(N_c\) must be long enough to represent a
feasible fallback maneuver to an admissible safe equilibrium.

At time \(t\), agent \(i\) optimizes a nominal state-input sequence
\[
    X_i^{\mathrm n}(t)
    :=
    \{x^{\mathrm n}_{i,(k|t)}\}_{k=0}^{N_n},
    \qquad
    U_i^{\mathrm n}(t)
    :=
    \{u^{\mathrm n}_{i,(k|t)}\}_{k=0}^{N_n-1},
\]
and a contingency state-input sequence as in~\eqref{ContingencyTrajectory}.
The corresponding predicted positions are denoted by
\[
    p^{\mathrm n}_{i,(k|t)}
    :=
    C_i x^{\mathrm n}_{i,(k|t)},
    \qquad
    p^{\mathrm c}_{i,(k|t)}
    :=
    C_i x^{\mathrm c}_{i,(k|t)} .
\]
Both prediction plans are initialized at the measured state,
\[
    x^{\mathrm n}_{i,(0|t)}
    =
    x^{\mathrm c}_{i,(0|t)}
    =
    x_i(t).
\]
The nominal trajectory is optimized for the control objective of the
agent, while the contingency trajectory is constrained to remain inside
the active safe set \(\mathcal S_i(t)\) and to terminate in an admissible
safe equilibrium.
The terminal contingency equilibrium
\[
    (\bar x_i^{\mathrm c}(t),\bar u_i^{\mathrm c}(t))
    \in
    \mathcal X_i\times\mathcal U_i,
\]
is included as a decision variable
and is required to be admissible with respect to $\mathcal S_i(t)$ according
to Definition~1, with the terminal condition~\eqref{TerminalConstrText}.

The two prediction plans share the first input, which is applied to
the real system after optimization:
\begin{equation}
    u_i(t)
    =
    u^{\mathrm n,*}_{i,(0|t)}
    =
    u^{\mathrm c,*}_{i,(0|t)}.
    \label{SharedOptimalInput}
\end{equation}
This coupling ensures that the applied control action is nominally optimal while remaining contingency-feasible.

\subsection{Objective Function}
\label{subsec:objective_function}

The local objective consists of a nominal performance term and a
contingency-offset term. The nominal term is used to drive the agent
towards its reference, while the contingency-offset term favors safe
terminal equilibria close to the desired reference $x^{\mathrm{ref}}_i$.

Let \(\ell_i^{\mathrm n}(\cdot)\) denote a nominal stage cost and
\(V_i^{\mathrm n}(\cdot)\) a nominal terminal cost. Moreover, let
\(V_i^{\mathrm c}(\cdot)\) denote an offset cost for the selected terminal
contingency equilibrium. The objective of agent \(i\) at time \(t\) is
given by
\[
\begin{aligned}
J_i(t)
:=
&\sum_{k=0}^{N_n-1}
\ell_i^{\mathrm n}
\left(
x^{\mathrm n}_{i,(k|t)},
u^{\mathrm n}_{i,(k|t)}
\right)
+
V_i^{\mathrm n}
\left(
x^{\mathrm n}_{i,(N_n|t)},
x_i^{\mathrm{ref}}
\right)
\\
&+
\gamma
V_i^{\mathrm c}
\left(
\bar x_i^{\mathrm c}(t),
x_i^{\mathrm{ref}}
\right),
\end{aligned}
\]
where \(\gamma>0\) weights the preference for terminal contingency equilibria that are close to the reference. It can be shown, that for a large enough \(\gamma\) the contingency terminal state is optimal with respect to the global reference \(x_i^{\mathrm{ref}}\), see ~\cite[Lemma~1]{Saccani2023}.

\subsection{Safety and Tail-Containment Constraints}
\label{subsec:safety_tail_constraints}

By \eqref{BodyContainmentConstraint} the contingency trajectory must remain inside the active safe set
\(\mathcal S_i(t)\), including the physical radius of the agent.
This constraint certifies safety of the contingency plan with respect to
the active safe set at the current time $t$.

For recursive feasibility, however, it is not sufficient to certify only
the current contingency plan. After the first input has been applied, the
remaining tail of the contingency trajectory must still be compatible with
the safe set available at the successor state $x_i(t^+)$. Therefore, for each
prediction step \(k\), the remaining contingency tail from \(k\) onward is
required to be contained in a safe set that can be generated from the
predicted state \(x^{\mathrm c}_{i,(k|t)}\).

Let
\[
    \mathcal G_{i,(k|t)}^{\mathrm c}
    :=
    \Gamma_i(x^{\mathrm c}_{i,(k|t)})
\]
denote the safe set generated from the predicted contingency state at
stage \(k\). The tail-containment condition is
\[
    \mathbb B(p^{\mathrm c}_{i,(\ell|t)},r_i)
    \subseteq
    \mathcal G_{i,(k|t)}^{\mathrm c},
    \qquad
    k\in\mathbb Z_0^{N_c},\quad
    \ell\in\mathbb Z_k^{N_c}.
\]
This condition ensures that, after the first input is applied, the shifted
contingency tail remains contained in the state-induced safe set generated
at the successor state. It is therefore the key compatibility condition
used in the recursive-feasibility argument.

\subsection{Lyapunov-Type Constraint}
\label{subsec:lyapunov_constraint}

To obtain a Lyapunov-type convergence property, the contingency plan is
also constrained by a decreasing cost bound. Let
\(\ell_i^{\mathrm c}\) denote a nonnegative contingency stage cost. Define
the contingency cost
\[
\begin{aligned}
J_i^{\mathrm c}(t)
:=
&\sum_{k=0}^{N_c-1}
\ell_i^{\mathrm c}
\left(
x^{\mathrm c}_{i,(k|t)}-\bar x_i^{\mathrm c}(t),
u^{\mathrm c}_{i,(k|t)}-\bar u_i^{\mathrm c}(t)
\right)
\\
&+
V_i^{\mathrm c}
\left(
\bar x_i^{\mathrm c}(t),
x_i^{\mathrm{ref}}
\right).
\end{aligned}
\]
A scalar bound \(\hat J_i^{\mathrm c}(t)\in\mathbb R_+\) is maintained
recursively, and the local MPC problem imposes
\[
    J_i^{\mathrm c}(t)\leq \hat J_i^{\mathrm c}(t).
\]
After applying the shared first input, the bound is updated according to
the shifted contingency tail,
\[
\hat J_i^{\mathrm c}(t^+)
:=
J_i^{\mathrm c,*}(t)
-
\ell_i^{\mathrm c}
\left(
x_i(t)-\bar x_i^{\mathrm c,*}(t),
u_i(t)-\bar u_i^{\mathrm c,*}(t)
\right).
\]
This update is independent of the particular safe-set construction. Once
recursive feasibility of the contingency candidate has been
established, the standard shifted-tail argument yields monotone decrease
of the optimal contingency cost.

\subsection{Local Finite-Horizon Optimal Control Problem}
\label{subsec:local_fhocp}

At each time \(t\), agent \(i\) solves the following local finite-horizon
optimal control problem:
\begin{subequations}
\label{eq:local_fhocp}
\begin{align}
\min_{\substack{
X_i^{\mathrm{n,c}}(t),U_i^{\mathrm{n,c}}(t),\\
\bar x_i^{\mathrm c}(t),\bar u_i^{\mathrm c}(t)
}}
\hspace{-1.4cm}&\qquad \qquad
J_i(t)
\label{eq:local_fhocp_objective}
\\
\text{s.t.}\
&
x^{\mathrm n}_{i,(0|t)}
=
x^{\mathrm c}_{i,(0|t)}
=
x_i(t)
\label{eq:local_fhocp_initial_state}
\\
&
u^{\mathrm n}_{i,(0|t)}
=
u^{\mathrm c}_{i,(0|t)}
\label{eq:local_fhocp_shared_input}
\\
&
x^{\mathrm n}_{i,(k+1|t)}
=
f_i(
x^{\mathrm n}_{i,(k|t)},
u^{\mathrm n}_{i,(k|t)}),
\
k\in\mathbb Z_0^{N_n-1}
\label{eq:local_fhocp_nom_dyn}
\\
&
x^{\mathrm c}_{i,(k+1|t)}
=
f_i(
x^{\mathrm c}_{i,(k|t)},
u^{\mathrm c}_{i,(k|t)}),
\
k\in\mathbb Z_0^{N_c-1}
\label{eq:local_fhocp_cont_dyn}
\\
&
x^{\mathrm n}_{i,(k+1|t)}\in\mathcal X_i,\ 
u^{\mathrm n}_{i,(k|t)}\in\mathcal U_i,
\
k\in\mathbb Z_0^{N_n-1}
\label{eq:local_fhocp_nom_constraints}
\\
&
x^{\mathrm c}_{i,(k+1|t)}\in\mathcal X_i,\
u^{\mathrm c}_{i,(k|t)}\in\mathcal U_i,
\
k\in\mathbb Z_0^{N_c-1}
\label{eq:local_fhocp_cont_constraints}
\\
&
\mathbb B(p^{\mathrm c}_{i,(k|t)},r_i)
\subseteq
\mathcal S_i(t),
\
k\in\mathbb Z_0^{N_c}
\label{eq:local_fhocp_safe_set_containment}
\\
&
x^{\mathrm c}_{i,(N_c|t)}
=
\bar x_i^{\mathrm c}(t)
\label{eq:local_fhocp_terminal_state}
\\
&
\bar x_i^{\mathrm c}(t)
=
f_i\!\left(
\bar x_i^{\mathrm c}(t),
\bar u_i^{\mathrm c}(t)
\right),
(\bar x_i^{\mathrm c}(t),\bar u_i^{\mathrm c}(t))
\in
\mathcal X_i\times\mathcal U_i
\label{eq:local_fhocp_terminal_equilibrium}
\\
&
\mathbb B(\bar p_i^{\mathrm c}(t),r_i)
\subseteq
\mathcal S_i(t),
\quad
\bar p_i^{\mathrm c}(t):=C_i\bar x_i^{\mathrm c}(t)
\label{eq:local_fhocp_terminal_safe_set}
\\
&
\mathbb B(p^{\mathrm c}_{i,(\ell|t)},r_i)
\subseteq
\Gamma_i(x^{\mathrm c}_{i,(k|t)}),
\
k\in\mathbb Z_0^{N_c},\;
\ell\in\mathbb Z_k^{N_c},
\label{eq:local_fhocp_tail_containment}
\\
&
J^c_i(t) \leq \hat{J}^c_i(t).
\end{align}
\end{subequations}

\section{Safe Set Construction}
\label{sec:memory_free_safe_set_construction}

This section defines the active safe sets $\mathcal S_i(t)$ used
in~\eqref{eq:local_fhocp}. Agent $i$ first constructs its own generated
safe set $\mathcal G_i(t)$ and, for every sensed neighbor
$j\in\mathcal N_i(t)$, reconstructs $\mathcal G_j(t)$ from the measured
state $x_j(t)$.

Whenever the generated safe sets are disjoint,
\[
    \mathcal G_i(t)\cap\mathcal G_j(t)=\emptyset,
\]
agent $i$ locally constructs a pairwise separator. Let $n_{ij}(t)$
denote a separating unit direction from $\mathcal G_i(t)$ towards
$\mathcal G_j(t)$ and define
\[
    \alpha_i(t)
    :=
    \max_{p\in\mathcal G_i(t)} n_{ij}(t)^\top p,
    \qquad
    \alpha_j(t)
    :=
    \min_{p\in\mathcal G_j(t)} n_{ij}(t)^\top p .
\]
The separating boundary is placed at
\[
    b_{ij}(t)
    :=
    \frac{1}{2}
    \left(
        \alpha_i(t)+\alpha_j(t)
    \right),
\]
yielding
\[
    \mathcal H_{ij}^{\mathrm{new}}(t)
    :=
    \left\{
        p\in\mathbb R^{n_p}
        \mid
        n_{ij}(t)^\top p < b_{ij}(t)
    \right\}.
\]
Agent $j$ performs the corresponding construction locally. The stored
separator is updated whenever the generated safe sets are disjoint and
otherwise retained:
\begin{equation}
    \mathcal H_{ij}(t)
    =
    \begin{cases}
        \mathcal H_{ij}^{\mathrm{new}}(t),
        &
        \mathcal G_i(t)\cap\mathcal G_j(t)=\emptyset,
        \\[1mm]
        \mathcal H_{ij}(t-1),
        &
        \mathcal G_i(t)\cap\mathcal G_j(t)\neq\emptyset,
    \end{cases}
    \label{eq:separator_update}
\end{equation}
whenever a valid previous separator exists.

Define the currently overlapping sensed neighbors by
\[
    \mathcal N_i^{\mathrm{ov}}(t^+)
    :=
    \left\{
        j\in\mathcal N_i(t^+)
        \;\middle|\;
        \mathcal G_i(t^+)\cap\mathcal G_j(t^+)\neq\emptyset
    \right\}.
\]

Let $R_{\max}$ uniformly bound the spatial extent of the generated safe
sets. The sensing condition
\begin{equation}
    R_{\mathrm{sense}}\geq 2R_{\max}
    \label{eq:min_sensing_range}
\end{equation}
ensures that generated safe sets cannot overlap while the corresponding
agents are outside each other's sensing range.

A special case occurs if a neighbor is first sensed at $t^+$ with already
overlapping generated safe sets, i.e.,
$j\notin\mathcal N_i(t)$ and
$j\in\mathcal N_i^{\mathrm{ov}}(t^+)$. Then no valid separator is
available. By \eqref{eq:min_sensing_range},
$\mathcal G_i(t)\cap\mathcal G_j(t)=\emptyset$, and the agents initialize
the bootstrap restrictions
\[
    \mathcal B_{ij}:=\mathcal G_i(t),
    \qquad
    \mathcal B_{ji}:=\mathcal G_j(t).
\]
These restrictions are retained until the generated safe sets become
disjoint again and regular separators are initialized.

Let $\mathcal N_i^{\mathrm{sep}}(t^+)$ denote the overlapping neighbors
for which a valid stored separator exists and
$\mathcal N_i^{\mathrm{boot}}(t^+)$ those for which a bootstrap
restriction is active. Agent $i$ then locally constructs
\begin{equation}
    \mathcal S_i(t^+)
    :=
    \mathcal G_i(t^+)
    \cap
    \bigcap_{j\in\mathcal N_i^{\mathrm{sep}}(t^+)}
        \mathcal H_{ij}(t^+)
    \cap
    \bigcap_{j\in\mathcal N_i^{\mathrm{boot}}(t^+)}
        \mathcal B_{ij}.
    \label{eq:active_safe_set_update}
\end{equation}
Consequently,
\(
    \mathcal S_i(t)\subseteq\mathcal G_i(t),
    \
    \forall i\in\mathcal I,\;t\in\mathbb Z_+.
\)

Hence, $\mathcal G_i(t)$ remains a state-dependent outer representation
of the active safe set. The construction requires only current sensed
neighbor states and locally stored separators or bootstrap restrictions;
no neighbor trajectories or neighbor safe-set update histories are
required.
\section{Closed-loop Guarantees}
\label{sec:closed_loop_guarantees}

\subsection{Assumptions}

\begin{assumption}[Initial feasibility and separation]
\label{ass:initial_feasibility}
At $t=0$, the local MPC problem is feasible for every agent
$i\in\mathcal I$. Moreover,
\[
    \mathcal S_i(0)\subseteq\mathcal G_i(0),
    \qquad
    \mathcal S_i(0)\cap\mathcal S_j(0)=\emptyset,
    \quad i\neq j.
\]
\end{assumption}

\begin{assumption}[Sensing completeness]
\label{ass:sensing_completeness}
Every pair with overlapping generated safe sets is mutually detected,
i.e.,
\[
    \mathcal G_i(t)\cap\mathcal G_j(t)\neq\emptyset
    \;\Longrightarrow\;
    j\in\mathcal N_i(t),\;
    i\in\mathcal N_j(t).
\]
\end{assumption}

\subsection{Recursive Feasibility and Collision Avoidance}

\begin{remark}[Nominal completion]
\label{rem:nominal_completion}
Whenever a feasible contingency candidate exists, there also exists a
nominal candidate satisfying
\eqref{eq:local_fhocp_initial_state},
\eqref{eq:local_fhocp_shared_input},
\eqref{eq:local_fhocp_nom_dyn}, and
\eqref{eq:local_fhocp_nom_constraints}.
\end{remark}

\begin{lemma}[Safe-set containment and separation]
\label{lem:safe_set_separation}
Under Assumptions~\ref{ass:initial_feasibility} and
\ref{ass:sensing_completeness}, the active safe sets satisfy
\begin{equation}
    \mathcal S_i(t)\subseteq\mathcal G_i(t),
    \quad
    \mathcal S_i(t)\cap\mathcal S_j(t)=\emptyset,
    \quad i\neq j, \ \ \forall t\in\mathbb Z_+.
    \label{eq:safe_set_separation}
\end{equation}
\end{lemma}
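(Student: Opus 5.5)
The containment $\mathcal S_i(t)\subseteq\mathcal G_i(t)$ is immediate. At $t=0$ it is part of Assumption~\ref{ass:initial_feasibility}. For $t\ge 1$ it follows from \eqref{eq:active_safe_set_update}, since $\mathcal S_i(t)$ is defined as $\mathcal G_i(t)$ intersected with further sets. The substance of the lemma is therefore the pairwise disjointness. I would prove it for a fixed pair $i\neq j$ at each time $t^+$, splitting into three cases according to the status of the pair at $t^+$: \emph{(a)} $\mathcal G_i(t^+)\cap\mathcal G_j(t^+)=\emptyset$; \emph{(b)} the generated sets overlap and a stored separator is used; \emph{(c)} they overlap and a bootstrap restriction is active. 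By Assumption~\ref{ass:sensing_completeness}, overlap implies mutual detection, so $j\in\mathcal N_i^{\mathrm{ov}}(t^+)$ and $i\in\mathcal N_j^{\mathrm{ov}}(t^+)$. Both agents therefore apply the corresponding restriction, which rules out one-sided constraints.

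Case (a) follows from containment alone: $\mathcal S_i\cap\mathcal S_j\subseteq\mathcal G_i\cap\mathcal G_j=\emptyset$.

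For case (b), let $s\le t^+$ be the last time at which the generated sets were disjoint. Then $\mathcal H_{ij}(t^+)=\mathcal H^{\mathrm{new}}_{ij}(s)$ and $\mathcal H_{ji}(t^+)=\mathcal H^{\mathrm{new}}_{ji}(s)$ by \eqref{eq:separator_update}. I would need that agents $i$ and $j$ construct consistent separators, i.e. $n_{ji}(s)=-n_{ij}(s)$ and the same $b$. This holds because both compute $\alpha_i,\alpha_j$ from the same commonly reconstructable sets $\mathcal G_i(s),\mathcal G_j(s)$ with a deterministic rule, which I would state explicitly as a convention. Then $\mathcal H_{ij}=\{n^\top p<b\}$ and $\mathcal H_{ji}=\{n^\top p>b\}$ are disjoint open half-spaces. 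Since $\mathcal S_i(t^+)\subseteq\mathcal H_{ij}(t^+)$ and $\mathcal S_j(t^+)\subseteq\mathcal H_{ji}(t^+)$, disjointness follows.

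For case (c), the restrictions are $\mathcal B_{ij}=\mathcal G_i(s)$ and $\mathcal B_{ji}=\mathcal G_j(s)$, where $s$ is the time just before first detection. By \eqref{eq:min_sensing_range} these are disjoint, so $\mathcal S_i(t^+)\cap\mathcal S_j(t^+)\subseteq\mathcal B_{ij}\cap\mathcal B_{ji}=\emptyset$. Again mutual activation comes from sensing completeness.

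The main obstacle I expect is \textbf{well-definedness of the stored separator and bootstrap regime over time}. First, I must show that every overlapping pair at $t^+$ falls into (b) or (c); that is, if no valid separator exists, then the pair was first sensed at $t^+$. Second, the pair may have overlapped since $t=0$. Here I would invoke Assumption~\ref{ass:initial_feasibility}: the initial sets $\mathcal S_i(0),\mathcal S_j(0)$ are disjoint, and I would treat them as the initial stored restrictions, playing the role of bootstrap sets. To handle this I would argue by induction on $t$, with the hypothesis that each overlapping, mutually detected pair carries a pair of disjoint restrictions, one stored by $i$ and one by $j$, that remain unchanged while the overlap persists.

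The inductive step then goes as follows. If the pair stays overlapping, the restrictions are retained, so they remain disjoint. If the pair becomes disjoint, case (a) applies and fresh consistent separators are created. If the pair newly overlaps, it either had a separator from a previous disjoint time, giving case (b), or it was just sensed, giving case (c) via \eqref{eq:min_sensing_range}.
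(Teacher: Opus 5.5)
Your proposal is correct and follows the same three-case argument as the paper: disjoint generated sets handled via containment, a stored separator giving disjoint half-spaces, and a bootstrap pair $\mathcal G_i(\tau),\mathcal G_j(\tau)$ that is disjoint by \eqref{eq:min_sensing_range}. Your additional steps make explicit what the paper's proof leaves implicit: invoking Assumption~\ref{ass:sensing_completeness} for mutual activation, fixing $n_{ji}=-n_{ij}$ so that $\mathcal H_{ij}\cap\mathcal H_{ji}=\emptyset$, using Assumption~\ref{ass:initial_feasibility} for containment at $t=0$, and, inside an induction, seeding pairs that overlap from $t=0$ with $\mathcal S_i(0),\mathcal S_j(0)$ as bootstrap restrictions, a case the paper's literal update rule does not cover.
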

\begin{proof}
Containment follows directly from
\eqref{eq:active_safe_set_update}. \\
\noindent For pairwise separation, consider arbitrary $i\neq j$. If
$\mathcal G_i(t)\cap\mathcal G_j(t)=\emptyset$, then
$\mathcal S_i(t)\cap\mathcal S_j(t)=\emptyset$ follows immediately from
$\mathcal S_i(t)\subseteq\mathcal G_i(t)$ and
$\mathcal S_j(t)\subseteq\mathcal G_j(t)$.

If a regular separator is active, then
\[
    \mathcal S_i(t)\subseteq\mathcal H_{ij}(t),
    \qquad
    \mathcal S_j(t)\subseteq\mathcal H_{ji}(t),
\]
and the separating construction gives
$\mathcal H_{ij}(t)\cap\mathcal H_{ji}(t)=\emptyset$.

Finally, if the bootstrap rule is active, then
\[
    \mathcal S_i(t)\subseteq\mathcal B_{ij},
    \qquad
    \mathcal S_j(t)\subseteq\mathcal B_{ji}.
\]
At bootstrap initialization,
$\mathcal B_{ij}=\mathcal G_i(\tau)$ and
$\mathcal B_{ji}=\mathcal G_j(\tau)$ for a time $\tau$ at which the pair
was not sensed. Hence,
$\mathcal B_{ij}\cap\mathcal B_{ji}=\emptyset$ by
\eqref{eq:min_sensing_range}. Thus the active safe sets remain pairwise
disjoint.
\end{proof}

\begin{theorem}[Recursive feasibility]
\label{thm:recursive_feasibility}
Suppose Assumptions~\ref{ass:model_constraints}, 
\ref{ass:initial_feasibility}, and \ref{ass:sensing_completeness} hold and
$\ell_i^{\mathrm c}(0,0)=0$. Then all local MPC problems are recursively
feasible, i.e., feasibility at time $t$ implies feasibility at $t^+$
for all $t\in\mathbb Z_+$.
\end{theorem}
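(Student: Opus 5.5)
We argue by the standard shifted-tail construction. Fix an agent $i$ and suppose the local problem \eqref{eq:local_fhocp} is feasible at time $t$, with optimal contingency solution $X_i^{\mathrm c,*}(t)$, $U_i^{\mathrm c,*}(t)$ and terminal equilibrium $(\bar x_i^{\mathrm c,*}(t),\bar u_i^{\mathrm c,*}(t))$. By Assumption~\ref{ass:model_constraints} and the shared first input \eqref{SharedOptimalInput}, we have $x_i(t^+)=x^{\mathrm c,*}_{i,(1|t)}$. As the candidate at $t^+$ we take
\[
x^{\mathrm c}_{i,(k|t^+)}:=x^{\mathrm c,*}_{i,(k+1|t)},\ k\in\mathbb Z_0^{N_c-1},
\qquad
x^{\mathrm c}_{i,(N_c|t^+)}:=\bar x_i^{\mathrm c,*}(t),
\]
with shifted inputs and last input $\bar u_i^{\mathrm c,*}(t)$. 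We keep the same terminal equilibrium. The nominal part is completed via Remark~\ref{rem:nominal_completion}. The dynamics, state and input constraints, initial condition, terminal equality and equilibrium conditions then hold directly, because the tail is a feasible piece of the time-$t$ solution and the equilibrium is invariant under $\bar u_i^{\mathrm c,*}(t)$.

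Tail containment \eqref{eq:local_fhocp_tail_containment} at $t^+$ is inherited from time $t$. The constraint for index pair $(k+1,\ell+1)$ at time $t$ is exactly the constraint for $(k,\ell)$ at $t^+$. The appended final state equals $x^{\mathrm c,*}_{i,(N_c|t)}$, so the cases $k$ or $\ell=N_c$ are covered by the old pair with index $N_c$, or by the case $\ell=N_c$ at time $t$.

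The crucial step is the active-safe-set constraints \eqref{eq:local_fhocp_safe_set_containment} and \eqref{eq:local_fhocp_terminal_safe_set}. We must show $\mathbb B(p,r_i)\subseteq\mathcal S_i(t^+)$ for every tail position $p$, where $\mathcal S_i(t^+)$ is given by \eqref{eq:active_safe_set_update}. We verify this factor by factor.

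\textbf{(a) The factor $\mathcal G_i(t^+)$.} Tail containment at $k=1$ gives $\mathbb B(p^{\mathrm c,*}_{i,(\ell|t)},r_i)\subseteq\Gamma_i(x^{\mathrm c,*}_{i,(1|t)})=\mathcal G_i(t^+)$ for all $\ell\ge1$.

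\textbf{(b) A separator $\mathcal H_{ij}(t^+)$ for $j\in\mathcal N_i^{\mathrm{sep}}(t^+)$.} We split according to \eqref{eq:separator_update}.
\begin{itemize}
\item If the separator was retained, then either $\mathcal H_{ij}(t^+)=\mathcal H_{ij}(t)\supseteq\mathcal S_i(t)$, or it was freshly constructed at $t$ and is active at $t$. Either way the tail lies in $\mathcal S_i(t)\subseteq\mathcal H_{ij}(t)$.
\item If the separator is refreshed at $t^+$, the defining case requires disjoint generated sets. By construction $\mathcal G_i(t^+)\subseteq\{n_{ij}^\top p\le\alpha_i\}$ with $\alpha_i< b_{ij}$, so $\mathcal G_i(t^+)\subseteq\mathcal H^{\mathrm{new}}_{ij}(t^+)$ and (a) applies.
\end{itemize}
The delicate point here is that a neighbor may be overlapping at $t^+$ while its separator was last refreshed at some earlier time $\tau$. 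We would argue by induction that the separator is unchanged on $[\tau,t^+]$ and that $\mathcal S_i(t)\subseteq\mathcal H_{ij}(t)$ whenever the separator was stored and $j$ was overlapping or newly relevant. When $j$ was non-overlapping at $t$ but a separator was just created at $t$, we use $\mathcal S_i(t)\subseteq\mathcal G_i(t)\subseteq\mathcal H_{ij}^{\mathrm{new}}(t)$.

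\textbf{(c) A bootstrap restriction $\mathcal B_{ij}$ for $j\in\mathcal N_i^{\mathrm{boot}}(t^+)$.}
\begin{itemize}
\item If it was initialized at $t$, then $\mathcal B_{ij}=\mathcal G_i(t)\supseteq\mathcal S_i(t)$, which contains the tail.
\item If it persisted from earlier, it was already a factor of $\mathcal S_i(t)$.
\end{itemize}

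Hence every tail body lies in the intersection $\mathcal S_i(t^+)$. Assumption~\ref{ass:sensing_completeness} and Lemma~\ref{lem:safe_set_separation} are used to ensure that the neighbor sets in \eqref{eq:active_safe_set_update} are well defined, with no unsensed overlaps.

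The key invariant behind (a)–(c) is that every factor of $\mathcal S_i(t^+)$ other than $\mathcal G_i(t^+)$ contains either $\mathcal S_i(t)$ or $\mathcal G_i(t^+)$. I expect the bookkeeping of case (b) to be the main obstacle, since it must correctly track when a separator is created relative to when overlap begins.

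Finally, for the Lyapunov constraint we compute $J_i^{\mathrm c}$ of the candidate. It equals $J_i^{\mathrm c,*}(t)-\ell_i^{\mathrm c}(x_i(t)-\bar x_i^{\mathrm c,*},u_i(t)-\bar u_i^{\mathrm c,*})+\ell_i^{\mathrm c}(0,0)$. Since $\ell_i^{\mathrm c}(0,0)=0$, this is exactly $\hat J_i^{\mathrm c}(t^+)$, so the constraint holds with equality. Since $i$ and $t$ were arbitrary, induction from Assumption~\ref{ass:initial_feasibility} yields recursive feasibility.
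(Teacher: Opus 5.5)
Your proposal is correct and matches the paper's proof essentially step for step. Both use the shifted contingency tail, tail containment at $k=1$ to land in $\mathcal G_i(t^+)$, containment of the tail in $\mathcal S_i(t)$, and the fact that every separator or bootstrap factor of $\mathcal S_i(t^+)$ contains $\mathcal S_i(t)$ (either as an old factor, or via $\mathcal G_i(t)\subseteq\mathcal H^{\mathrm{new}}_{ij}(t)$, resp.\ $\mathcal B_{ij}=\mathcal G_i(t)$), followed by the index-shift argument for tail containment, the $\ell_i^{\mathrm c}(0,0)=0$ cost identity, and nominal completion. The only blemishes are cosmetic: your ``refreshed at $t^+$'' sub-case is vacuous, since such a separator belongs to a non-overlapping neighbor and is not a factor of $\mathcal S_i(t^+)$, and a separator freshly built at $t$ is not ``active at $t$''---its containment of $\mathcal S_i(t)$ comes from $\mathcal G_i(t)\subseteq\mathcal H_{ij}^{\mathrm{new}}(t)$, as you correctly state afterwards.
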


\begin{proof}
Fix an arbitrary agent $i$ and let an optimal solution at time
$t$ be given. Since the applied input is the shared first contingency
input and the dynamics are exact,
\begin{equation}
    x_i(t^+)
    =
    x^{\mathrm c,*}_{i,(1|t)}.
    \label{eq:shift_initial_state}
\end{equation}
Consider the standard shifted contingency candidate
\begin{align}
    \tilde x^{\mathrm c}_{i,(k|t^+)}
    &=
    x^{\mathrm c,*}_{i,(k+1|t)},
    && k\in\mathbb Z_0^{N_c-1},
    \\
    \tilde x^{\mathrm c}_{i,(N_c|t^+)}
    &=
    \bar x_i^{\mathrm c,*}(t),
    \\
    \tilde u^{\mathrm c}_{i,(k|t^+)}
    &=
    u^{\mathrm c,*}_{i,(k+1|t)},
    && k\in\mathbb Z_0^{N_c-2},
    \\
    \tilde u^{\mathrm c}_{i,(N_c-1|t^+)}
    &=
    \bar u_i^{\mathrm c,*}(t),
\end{align}
with
\(
    \tilde{\bar x}_i^{\mathrm c}(t^+)
    =
    \bar x_i^{\mathrm c,*}(t),
    \
    \tilde{\bar u}_i^{\mathrm c}(t^+)
    =
    \bar u_i^{\mathrm c,*}(t).
\) \\
Together with \eqref{eq:shift_initial_state}, the contingency dynamics,
state/input constraints, and terminal-equilibrium constraints follow
directly from feasibility at time $t$.

It remains to verify the constraints affected by the safe-set update.
First, \eqref{eq:local_fhocp_tail_containment} at time $t$ with $k=1$
gives
\begin{equation}
    \mathbb B
    \left(
        \tilde p^{\mathrm c}_{i,(k|t^+)},r_i
    \right)
    \subseteq
    \Gamma_i
    \left(
        x^{\mathrm c,*}_{i,(1|t)}
    \right)
    =
    \mathcal G_i(t^+),
    \ \
    k\in\mathbb Z_0^{N_c}.
    \label{eq:shift_in_new_generated_set}
\end{equation}
Moreover, since the shifted candidate consists only of the remaining
states of the feasible contingency trajectory at time $t$, including
the repeated terminal equilibrium,
\begin{equation}
    \mathbb B
    \left(
        \tilde p^{\mathrm c}_{i,(k|t^+)},r_i
    \right)
    \subseteq
    \mathcal S_i(t),
    \qquad
    k\in\mathbb Z_0^{N_c}.
    \label{eq:shift_in_old_active_set}
\end{equation}

Every regular separator used in the construction of
$\mathcal S_i(t^+)$ contains $\mathcal S_i(t)$. Indeed, if it was
already active at time $t$, this follows directly from the previous
safe-set construction; if it is newly activated at $t^+$, it was
constructed from the disjoint generated safe sets at time $t$ and hence
contains
$\mathcal G_i(t)\supseteq\mathcal S_i(t)$.
Similarly, an already active bootstrap restriction contains
$\mathcal S_i(t)$, whereas a newly initialized one satisfies
\[
    \mathcal B_{ij}
    =
    \mathcal G_i(t)
    \supseteq
    \mathcal S_i(t).
\]
Together with
\eqref{eq:shift_in_new_generated_set}--\eqref{eq:shift_in_old_active_set}
and the update \eqref{eq:active_safe_set_update}, this yields
\[
    \mathbb B
    \left(
        \tilde p^{\mathrm c}_{i,(k|t^+)},r_i
    \right)
    \subseteq
    \mathcal S_i(t^+),
    \qquad
    k\in\mathbb Z_0^{N_c}.
\]
Hence, the active safe-set containment constraint is feasible at $t^+$;
the terminal safe-set constraint follows from $k=N_c$.

It remains to verify tail containment. For each prediction stage define
the remaining shifted contingency tail by
\[
    \tilde{\mathcal T}_{i,k}(t^+)
    :=
    \bigcup_{\ell=k}^{N_c}
    \mathbb B
    \left(
        \tilde p^{\mathrm c}_{i,(\ell|t^+)},r_i
    \right).
\]
For every $k\in\mathbb Z_0^{N_c-1}$, the shift gives
\[
    \tilde{\mathcal T}_{i,k}(t^+)
    =
    \bigcup_{\ell=k+1}^{N_c}
    \mathbb B
    \left(
        p^{\mathrm c,*}_{i,(\ell|t)},r_i
    \right),
\]
where repetition of the terminal equilibrium does not change the union.
Applying \eqref{eq:local_fhocp_tail_containment} at time $t$ with
prediction stage $k+1$ therefore yields
\[
    \tilde{\mathcal T}_{i,k}(t^+)
    \subseteq
    \Gamma_i
    \left(
        x^{\mathrm c,*}_{i,(k+1|t)}
    \right)
    =
    \Gamma_i
    \left(
        \tilde x^{\mathrm c}_{i,(k|t^+)}
    \right).
\]
For $k=N_c$, the same property follows directly from
\eqref{eq:local_fhocp_tail_containment} at the terminal stage.
Thus, the tail-containment constraint is feasible at $t^+$.

Finally, the terminal equilibrium is unchanged and the appended
equilibrium stage contributes zero stage cost. Hence,
\begin{align}
    \tilde J_i^{\mathrm c}(t^+)
    &=
    J_i^{\mathrm c,*}(t)
    -
    \ell_i^{\mathrm c}
    \left(
        x_i(t)-\bar x_i^{\mathrm c,*}(t),
        u_i(t)-\bar u_i^{\mathrm c,*}(t)
    \right)
    \nonumber\\
    &=
    \hat J_i^{\mathrm c}(t^+),
\end{align}
so the Lyapunov-bound constraint is feasible as well.

By Remark~\ref{rem:nominal_completion}, the feasible contingency candidate constructed above admits a corresponding nominal completion with the same first input. Hence, the complete local MPC problem is feasible at $t^+$. Since feasibility holds at $t=0$ by Assumption~\ref{ass:initial_feasibility}, induction establishes recursive feasibility for all $t\in\mathbb Z_+$.
\end{proof}

\begin{theorem}[Collision avoidance]
\label{thm:collision_avoidance}
Under the assumptions of
Theorem~\ref{thm:recursive_feasibility}, the closed-loop execution is
collision-free for all $t\in\mathbb Z_+$.
\end{theorem}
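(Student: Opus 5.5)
The plan is to combine Theorem~\ref{thm:recursive_feasibility} with Lemma~\ref{lem:safe_set_separation} and show that each agent's physical body stays inside its own active safe set at every closed-loop time. Collision-freeness then follows from pairwise disjointness of the active safe sets.

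First, I invoke Theorem~\ref{thm:recursive_feasibility} together with Assumption~\ref{ass:initial_feasibility}. Induction on $t$ then shows that the local problem \eqref{eq:local_fhocp} is feasible for every agent $i\in\mathcal I$ and every $t\in\mathbb Z_+$. In particular, an optimal (or at least feasible) contingency trajectory exists at each $t$, and its initial state is $x^{\mathrm c,*}_{i,(0|t)}=x_i(t)$ by \eqref{eq:local_fhocp_initial_state}.

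Second, I evaluate the safe-set containment constraint \eqref{eq:local_fhocp_safe_set_containment} at prediction stage $k=0$. Since $p^{\mathrm c,*}_{i,(0|t)}=C_ix_i(t)=p_i(t)$, this gives
\[
\mathbb B(p_i(t),r_i)\subseteq\mathcal S_i(t)\qquad\forall i\in\mathcal I,\ t\in\mathbb Z_+ .
\]
So the actual body of each agent lies in its active safe set at every time step. No argument about intermediate predicted states is needed, because the constraint applies directly to the measured current state.

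Third, I apply Lemma~\ref{lem:safe_set_separation}, which yields $\mathcal S_i(t)\cap\mathcal S_j(t)=\emptyset$ for all $i\neq j$ and all $t$. It follows that
\[
\mathbb B(p_i(t),r_i)\cap\mathbb B(p_j(t),r_j)\subseteq\mathcal S_i(t)\cap\mathcal S_j(t)=\emptyset,
\]
which is exactly collision avoidance in the sense of disjoint bodies.

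The only delicate point is that Lemma~\ref{lem:safe_set_separation} must apply along the actual closed-loop trajectory. Its bootstrap case depends on \eqref{eq:min_sensing_range} and on Assumption~\ref{ass:sensing_completeness}, and these guarantee that every overlapping pair has either a stored separator or a bootstrap restriction. I will simply cite the lemma as stated, noting that its hypotheses are included among those of Theorem~\ref{thm:recursive_feasibility}. If one insists on a strict notion of collision, namely that the bodies must not even touch, this follows as well: the separators $\mathcal H_{ij}$ are open half-spaces, and the bootstrap sets are disjoint, so the intersection above is genuinely empty.
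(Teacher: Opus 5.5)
Your proposal is correct and follows the paper's proof. Feasibility at every $t$ (from Theorem~\ref{thm:recursive_feasibility} and Assumption~\ref{ass:initial_feasibility}) gives $\mathbb B(p_i(t),r_i)\subseteq\mathcal S_i(t)$, and Lemma~\ref{lem:safe_set_separation} then makes the bodies disjoint. You simply spell out the $k=0$ containment step that the paper leaves implicit, and your closing remark on open half-spaces is not needed, since disjointness of the closed balls already means the bodies do not touch.
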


\begin{proof}
Recursive feasibility implies
\(
    \mathbb B(p_i(t),r_i)\subseteq\mathcal S_i(t).
\)
By Lemma~\ref{lem:safe_set_separation}, the active safe sets are
pairwise disjoint. Hence,
\[
    \mathbb B(p_i(t),r_i)
    \cap
    \mathbb B(p_j(t),r_j)
    =
    \emptyset,
    \qquad i\neq j.
\]
\end{proof}

\begin{corollary}[Plug-and-play operation]
\label{cor:pnp}
If a joining agent has a feasible local MPC problem and an initial active
safe set disjoint from all active agents, and
Assumptions~\ref{ass:sensing_completeness} remain satisfied, recursive feasibility and
collision avoidance are preserved under join and leave events.
\end{corollary}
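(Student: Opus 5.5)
The plan is to reduce Corollary~\ref{cor:pnp} to Lemma~\ref{lem:safe_set_separation}, Theorem~\ref{thm:recursive_feasibility} and Theorem~\ref{thm:collision_avoidance}. Model the system as a time-varying agent set $\mathcal I(t)$, with join and leave events occurring between sampling instants. Between events the agent set is fixed, so the earlier results apply verbatim. The only work is to show that the two induction hypotheses, namely feasibility of every local problem and pairwise disjointness of the active safe sets together with $\mathcal S_i\subseteq\mathcal G_i$, survive each event. One can then restart the induction of Theorem~\ref{thm:recursive_feasibility} at the event time, treating that time as a new ``$t=0$'' in Assumption~\ref{ass:initial_feasibility}.

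\emph{Leave events.} Suppose agent $j$ leaves at time $t^+$. For every remaining agent $i$ the separator $\mathcal H_{ij}$ or bootstrap restriction $\mathcal B_{ij}$ is dropped from \eqref{eq:active_safe_set_update}, so $\mathcal S_i(t^+)$ can only become larger. The shifted candidate from the proof of Theorem~\ref{thm:recursive_feasibility} satisfies containment in $\mathcal G_i(t^+)$ and in every retained separator or restriction, by exactly the same argument, so it remains feasible. Pairwise disjointness among the remaining agents is inherited, because removing an agent removes no constraint between any two remaining agents.

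\emph{Join events.} Suppose agent $a$ joins at time $t^+$. By hypothesis its local problem is feasible and $\mathcal S_a(t^+)$ is disjoint from every active $\mathcal S_i(t^+)$, and we also impose $\mathcal S_a(t^+)\subseteq\mathcal G_a(t^+)$ as in \eqref{eq:active_safe_set_update}. The existing agents are unaffected at the join instant. If $a$ becomes a sensed overlapping neighbor of an agent $i$, then at the next step the pair initializes either a regular separator (if $\mathcal G_a\cap\mathcal G_i=\emptyset$) or a bootstrap restriction, exactly as for any first-sensed neighbor.

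\textbf{Main obstacle.} The delicate case is the first sensing with already overlapping generated sets. The bootstrap disjointness argument in Lemma~\ref{lem:safe_set_separation} relies on $\mathcal G_i(\tau)\cap\mathcal G_a(\tau)=\emptyset$ at a time $\tau$ when the pair was not yet sensed. For a newly joined agent that earlier time $\tau$ does not exist. I would fix this by taking the bootstrap restrictions at the join instant to be the active sets themselves, $\mathcal B_{ia}:=\mathcal S_i(t^+)$ and $\mathcal B_{ai}:=\mathcal S_a(t^+)$. These are disjoint by hypothesis, and each contains the respective current active set. With this choice, the key step in Theorem~\ref{thm:recursive_feasibility} goes through unchanged: every newly activated restriction contains the previous active set. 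Sensing completeness (Assumption~\ref{ass:sensing_completeness}) guarantees that every overlapping pair involving $a$ is handled by one of these rules.

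With these invariants restored at each event, the induction of Theorem~\ref{thm:recursive_feasibility} continues. Collision avoidance then follows exactly as in Theorem~\ref{thm:collision_avoidance}.
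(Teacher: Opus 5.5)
Your reduction is the one the paper relies on. The paper states the corollary without a separate proof, as a direct consequence of Lemma~\ref{lem:safe_set_separation} and Theorems~\ref{thm:recursive_feasibility}--\ref{thm:collision_avoidance}: leave events only delete constraints from \eqref{eq:active_safe_set_update}, and a join is treated as a fresh instance of Assumption~\ref{ass:initial_feasibility}.

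Your handling of the bootstrap case is a genuine addition, and you are right that it needs one. The paper's rule $\mathcal B_{ai}:=\mathcal G_a(t)$ is undefined for an agent that did not exist at $t$, and the sensing-range argument in Lemma~\ref{lem:safe_set_separation} has nothing to apply to. The same omission already occurs at $t=0$. Assumption~\ref{ass:initial_feasibility} alone supplies neither separators nor bootstraps for pairs whose generated sets overlap from the start. So ``restart at a new $t=0$'' is sound only together with your freezing device. Freezing does work, but the justification is slightly different from what you wrote. At the join instant, intersecting with $\mathcal B_{ia}=\mathcal S_i(t^+)$ (computed without $a$) is a no-op, so $i$'s shifted candidate stays feasible. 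Only from the next step on is it an already-active restriction containing the previous active set.

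What you must make explicit is that freezing may be used only for genuine joins. Take an ordinary neighbor $j$ first sensed with overlapping generated sets. Freezing $\mathcal S_i(t^+)$ and $\mathcal S_j(t^+)$ then gives no separation: without other neighbors these are just $\mathcal G_i(t^+)$ and $\mathcal G_j(t^+)$, which overlap. There the paper's rule $\mathcal B_{ij}:=\mathcal G_i(t)$ is required. An existing agent therefore has to know that $a$ has just joined rather than just entered its sensing range. That information is not available under Assumption~\ref{ass:information_pattern}. You have two ways to close this:
\begin{itemize}
\item Declare join awareness as part of the join protocol.
\item Let existing agents keep the uniform rule $\mathcal B_{ia}:=\mathcal G_i(t)$, let only the joiner freeze $\mathcal B_{ai}:=\mathcal S_a(t^+)$, and strengthen the join hypothesis to $\mathcal S_a(t^+)\cap\mathcal G_i(t)=\emptyset$ for every $i$ whose generated set overlaps $\mathcal G_a(t^+)$.
\end{itemize}
The stronger hypothesis is needed in the second option because disjointness from $\mathcal S_i(t^+)$ alone does not stop $\mathcal S_i$ from later re-entering $\mathcal S_a(t^+)$ inside $\mathcal G_i(t)$.
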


\subsection{Lyapunov-Type Convergence}
By Theorem~\ref{thm:recursive_feasibility},
\begin{equation*}
    J_i^{\mathrm c,*}(t^+)
    \leq
    J_i^{\mathrm c,*}(t)
    -
    \ell_i^{\mathrm c}
    \left(
        x_i(t)-\bar x_i^{\mathrm c,*}(t),
        u_i(t)-\bar u_i^{\mathrm c,*}(t)
    \right).
    \label{eq:contingency_cost_decrease}
\end{equation*}
Under the standard arguments of Lyapunov stability theory
\cite{Mayne2000}, the shifted-tail
Lyapunov argument applies unchanged. The corresponding proof is given
in \cite{StudtSchildbach2026FoS,Saccani2023} and is therefore not repeated here.

\begin{figure*}[t]
    \centering
    \includegraphics[width=0.97\textwidth]{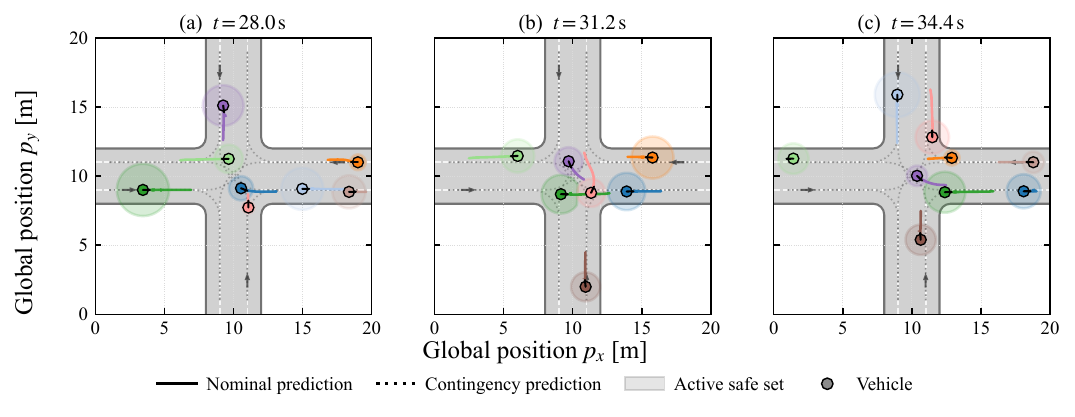}
    \caption{
        Representative snapshots of the closed-loop intersection scenario at different timesteps $t$. Solid lines denote the nominal predictions, dotted lines the contingency predictions, and shaded regions the locally active safe sets. The driving maneuver of each vehicle is unknown to all other agents.
    }
    \label{fig:intersection_snapshots}
\end{figure*}
\section{Simulation Study}
\label{sec:simulation}

The proposed framework is evaluated in an autonomous-driving scenario in which multiple vehicles traverse a four-way intersection, see Fig. \ref{fig:intersection_snapshots}. The setup incorporates nonholonomic vehicle dynamics, lane constraints, different driving maneuvers, and a continuously changing set of active agents. A demonstration video illustrating the intersection scenario is available at \url{https://youtu.be/t0YpBUVS6nU}.

\subsection{Setup}
Each vehicle is modeled by the kinematic bicycle model
\begin{equation}
    \dot p_x = v\cos(\psi), \
    \dot p_y = v\sin(\psi), \
    \dot \psi = \frac{v}{l}\tan(\delta), \
    \dot v = a,
    \label{eq:kinematic_bicycle}
\end{equation}
with state $x$ and input $u$:
\begin{equation}
    x = [p_x,p_y,\psi,v]^\top \qquad
    u = [a,\delta]^\top.
\end{equation}
Here, $p_x$ and $p_y$ denote the Cartesian position, $\psi$ the vehicle heading, $v$ the longitudinal velocity, $a$ the longitudinal acceleration, and $\delta$ the steering angle. The wheelbase is set to $l=1\,\mathrm{m}$. The continuous-time dynamics are discretized using a fourth-order Runge--Kutta scheme with a sampling time of $T_s=0.2\,\mathrm{s}$.

For collision avoidance, each vehicle is represented by a Euclidean disc of radius
$r_{\mathrm{agent}}=0.4\,\mathrm{m}$.
The state and input constraints are
\begin{equation}
    0\frac{m}{s} \leq v \leq 3\frac{m}{s}, \
    -2\frac{m}{s^2} \leq a \leq 1\frac{m}{s^2}, \ \text{and} \
    |\delta| \leq 28^\circ.
\end{equation}
Furthermore, the vehicle position is constrained to the $20\times20\,\mathrm{m}^2$ workspace. The contingency prediction is subject to hard lane-keeping constraints, whereas the nominal trajectory employs softened lane constraints to retain performance flexibility.

Vehicles enter the intersection from all four directions and may proceed straight, turn left, or turn right. New vehicles are spawned every $2\,\mathrm{s}$ whenever sufficient entrance clearance is available, while completed vehicles leave the active agent set $\mathcal{I}$. Maneuvers are sampled with probabilities $0.5$, $0.25$, and $0.25$, respectively, until $50$ vehicles have traversed the intersection.

Following the construction in Section~\ref{sec:memory_free_safe_set_construction}, the generated safe set of agent $i$ is chosen such that it contains a complete stopping maneuver after the shared first input. Let
\begin{equation}
    v_i^+(t)
    =
    \min\left\{
        v_i(t)+a_{\max}T_s,\,
        v_{\max}
    \right\}
\end{equation}
denote the largest velocity attainable after one time step. The radius of the generated safe set is then given by
\begin{equation}
    R_i(t)
    =
    r_{\mathrm{agent}}
    +
    \frac{v_i(t)+v_i^+(t)}{2}T_s
    +
    \frac{(v_i^+(t))^2}{2|a_{\min}|}.
    \label{eq:safe_set_radius}
\end{equation}
The second term bounds the displacement during the shared first input under maximum acceleration, while the third term corresponds to the subsequent braking distance. Since the Euclidean displacement from the current position cannot exceed the
traveled path length, the stopping-distance bound remains valid independently
of the steering motion. The generated safe set is therefore chosen as
\begin{equation}
    \mathcal{G}_i(t)
    =
    \mathcal{B}\!\left(p_i(t),R_i(t)\right).
\end{equation}
The active safe sets $\mathcal{S}_i(t)$ are constructed via the update rule from Section \ref{sec:memory_free_safe_set_construction}.

For the selected bounds, the maximum generated radius is
$R_{\max}=3.25\,\mathrm{m}$ and the finite sensing range is chosen as
\begin{equation}
    R_{\mathrm{sense}}
    =
    2R_{\max}
    =
    6.5\,\mathrm{m}.
\end{equation}
The nominal prediction horizon is set to $N_n=10$. The contingency horizon is selected such that one shared first input can always be followed by sufficiently many braking inputs to reach zero velocity,
\begin{equation}
    N_c
    =
    1+
    \left\lceil
        \frac{v_{\max}}
        {|a_{\min}|T_s}
    \right\rceil
    =
    9.
    \label{eq:contingency_horizon_sim}
\end{equation} 
All local nonlinear MPC problems are implemented in CasADi and solved using IPOPT \cite{IPOPT}.

\subsection{Results}

Figure~\ref{fig:intersection_snapshots} illustrates three representative time steps of the closed-loop simulation. The displayed situations highlight interactions between vehicles approaching the intersection from different directions and following mutually unknown maneuvers. The contingency predictions remain confined to the resulting admissible regions, while the nominal predictions retain sufficient flexibility to efficiently follow the assigned routes through the intersection.

The complete closed-loop behavior is summarized in Fig.~\ref{fig:intersection_trajectories}, which shows the executed trajectories of all $50$ vehicles. Despite repeated arrivals and departures and simultaneous straight, left-turn, and right-turn maneuvers, all vehicles successfully traverse the intersection and leave the workspace through their assigned exits. In particular, vehicles remain close to their prescribed lanes while locally coordinating their motion only through the state-based safe-set construction.

\begin{figure}[h!]
    \centering
    \includegraphics[width=0.8\columnwidth]{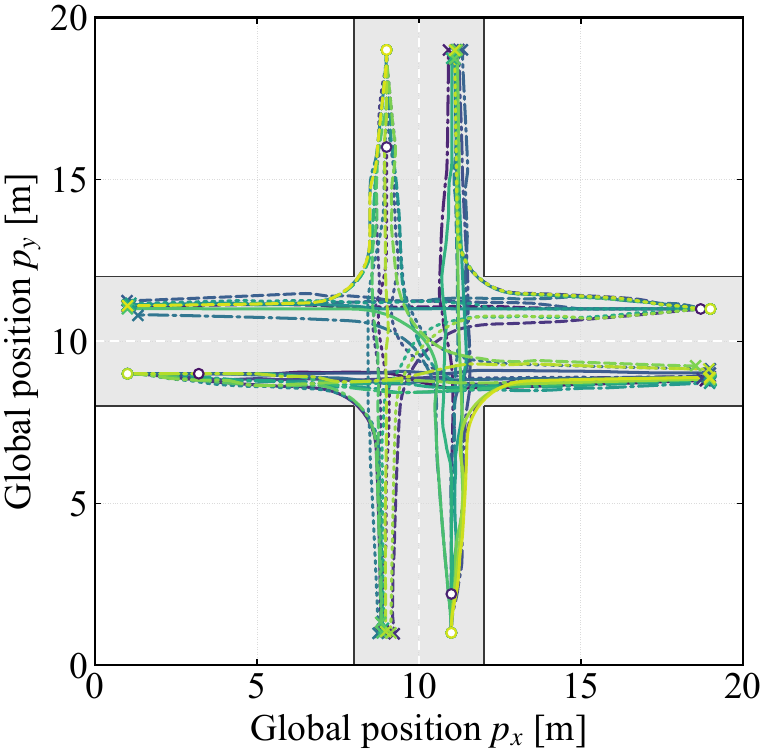}
    \caption{
        Closed-loop trajectories of all $50$ vehicles traversing the intersection.
        Each color corresponds to an individual vehicle.
        All agents successfully reach their assigned exit while remaining collision-free.
    }
    \label{fig:intersection_trajectories}
\end{figure}

The minimum center-to-center distance observed over the complete simulation is
\begin{equation}
    d_{\min} = 1.026\,\mathrm{m},
\end{equation}
which remains above the physical collision threshold
\begin{equation}
    d_{\mathrm{col}}
    =
    2r_{\mathrm{agent}}
    =
    0.8\,\mathrm{m}.
\end{equation}
Thus, no collision occurs throughout the complete experiment despite the plug-and-play operation of vehicles and the absence of communicated maneuver intentions.

Finally, the contingency costs, normalized with respect to the initial value of each individual vehicle, exhibit the Lyapunov-type behavior enforced by the contingency MPC formulation, see Fig.~\ref{fig:Lyapunov_costs}.

\begin{figure}[h!]
    \centering
    \includegraphics[width=1\columnwidth]{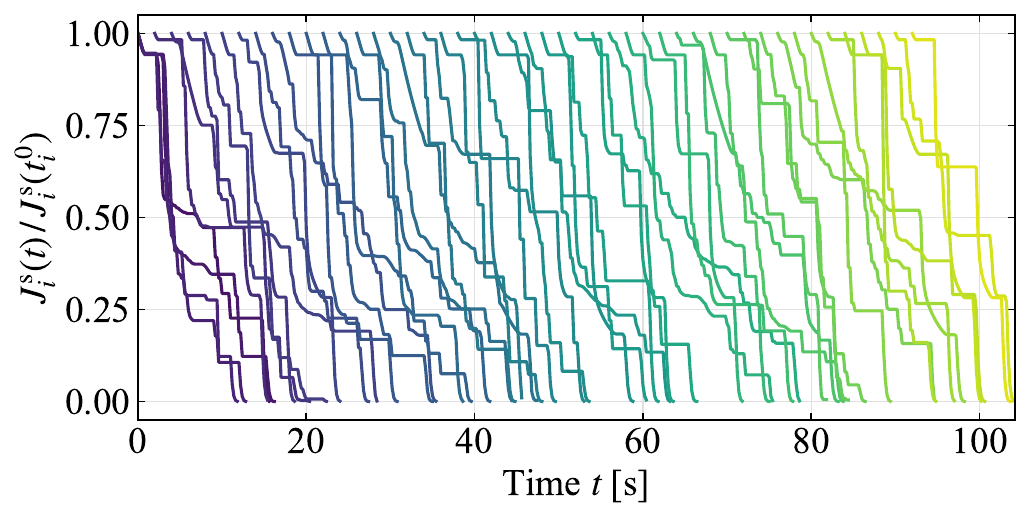}
    \caption{Closed-loop trajectories of all 50 vehicles traversing the
    intersection. Each color corresponds to an individual vehicle, and
    $t_i^0$ denotes the joining time of agent $i$. All agents successfully
    reach their assigned exit while remaining collision-free.}
    \label{fig:Lyapunov_costs}
\end{figure}

Overall, the simulation demonstrates that the proposed decentralized scheme can handle nonlinear vehicle dynamics, different and mutually unknown driving maneuvers, and dynamically changing intersection traffic while retaining a purely state-based information pattern.

\section{Conclusion and Discussion}
\label{sec:conclusion}

This paper introduced the first provable decentralized MPC under finite sensing and a state-only information pattern. By maintaining a state-dependent outer representation of the active safe sets, the proposed scheme avoids history-dependent neighbor reconstruction while preserving recursive feasibility, collision avoidance, and Lyapunov-type convergence. The intersection study demonstrates the applicability of the framework to nonlinear vehicle dynamics, unknown maneuver intentions, and plug-and-play multi-agent operation.

The present guarantees rely on exact agent models and complete detection of potentially interacting agents. Future work will therefore address robustness against model uncertainty and disturbances, as well as experimental validation and larger-scale real-time implementations.

\bibliography{refs}
\end{document}